\newtheorem{theorem}{Theorem}[section]
\newtheorem{proposition}[theorem]{Proposition}
\newtheorem{lemma}[theorem]{Lemma}
\newtheorem{corollary}[theorem]{Corollary}
\theoremstyle{definition}
\theoremstyle{remark}
\numberwithin{equation}{section}
\DeclareMathOperator{\supp}{supp}
\newcommand{\abs}[1]{\left\vert#1\right\vert}
\newcommand{\set}[1]{\left\{#1\right\}}
\newcommand{\norm}[1]{\left\Vert#1\right\Vert}
\begin{document}
\title[]{Approximating the identity of convolution with random mean and random variance}
%


\author[]{Hugo Aimar}
\author[]{Ivana G\'{o}mez}


\subjclass[2010]{Primary 42B25, 44A35}

\keywords{Approximate identity; convolution; differentiation theorem}

\begin{abstract}
We provide sufficient conditions on the profile $\varphi$, on the sequence of random variables $\varepsilon_j>0$ and on the sequence of random vectors $y_j\in\mathbb{R}^n$ such that $\mathscr{E}\left(\frac{1}{\varepsilon_j^n(\omega)}\int_{z\in\mathbb{R}^n}\varphi\left(\frac{\abs{x-z-y_j(\omega)}}{\varepsilon_j(\omega)}\right)f(z) dz\right)\underset{j\to\infty}{\longrightarrow} f(x)
$ for almost every $x\in\mathbb{R}^n$, $f\in L^p(\mathbb{R}^n)$, $1\leq p\leq\infty$, where $\mathscr{E}$ denotes the expectation, $\varepsilon_j$ tends to $0\in\mathbb{R}$ in law and $y_j$ tends to $\mathbf{0}\in\mathbb{R}^n$ in law.
\end{abstract}
\maketitle

\section{Introduction}

Classical Harmonic Analysis is strongly related to the two basic geometric operations of translation and dilation. A paradigmatic analytic problem involving these basic operators is the pointwise approximation of the identity of convolution. The basic question is the almost everywhere convergence of $K_\varepsilon * f(x) = \int_{y\in\mathbb{R}^n}K_\varepsilon (x-y)f(y) dy$ to $f(x)$ when $\varepsilon\to 0^+$, with $K_\varepsilon(x)=\tfrac{1}{\varepsilon^n}K(\tfrac{x-y}{\varepsilon})$, $\int_{\mathbb{R}^n} K(x)dx=1$ and $f$ belongs to $L^p(\mathbb{R}^n)$, $1\leq p\leq \infty$. The most general conditions on $K$ in order to solve the almost everywhere convergence problem goes back to the 1976 results due to Felipe Z\'{o} (see \cite{Zo76} or \cite{deGuzmanBook}).
It is worthy noticing at this point that the main result in \cite{Zo76} can also be obtained from the vector valued approach to the Calder\'on-Zygmund theory of singular integrals. The classical results (see \cite{SteinBook70}) of decreasing kernels are covered by this general approach.

In order to introduce the problem that we consider in this note, let us recall that the basic property of a nonnegative kernel $K_\varepsilon(x,y)=\tfrac{1}{\varepsilon^n}K(\tfrac{x-y}{\varepsilon})$ in order to produce an approximation to the Dirac delta, is that $\int_{x\in\mathbb{R}^n}K(x) dx=1$. In other words $K$ and hence each $K_\varepsilon(\cdot,y)$ are probability densities in $\mathbb{R}^n$. Assume that $K$ has finite second moments, i.e. $\int_{x\in\mathbb{R}^n}\abs{x}^2K(x) dx<\infty$. An important case of the above is that of $K(x)=\varphi(\abs{x})$. The symmetry of the kernel shows immediately that the expected value of the probability distribution $K_\varepsilon(\cdot,y)$ is $y$. Moreover, its variance is $\varepsilon^2$. Hence $\varepsilon$ is its standard deviation.

In this note we aim to consider the pointwise convergence of the mean when the standard deviation $\varepsilon$ and the expected value $y$ of $K_\varepsilon(\cdot - y)$ are random variables that converge ``in law'' to the corresponding Dirac deltas at the origins of $\mathbb{R}$ and of $\mathbb{R}^n$ respectively.

Let us precise the above. Let $K(x)=\varphi(\abs{x})$. Let $(\Omega,\mathscr{F},\mathscr{P})$ be a probability space. Let $\varepsilon_j:\Omega\to\mathbb{R}^+$ be a sequence of random variables distributed according to the one dimensional probability measures $\nu_j$. Let $y_j:\Omega\to\mathbb{R}^n$ be a sequence of random vectors distributed according to the $n$ dimensional probability measures $\mu_j$. Assume that $\nu_j\to\delta_0$ and $\mu_j\to\delta_{\mathbf{0}}$ for $j\to\infty$ vaguely where $\delta_0$ and $\delta_{\mathbf{0}}$ denote the Dirac deltas at $0\in\mathbb{R}$ and at $\mathbf{0}\in\mathbb{R}^n$ respectively. Set $\Pi_j$ to denote the joint distribution of $\varepsilon_j$ and $y_j$. We consider sufficient conditions on $\varphi$, $\nu_j$, $\mu_j$ and $\Pi_j$ in order to have
\begin{equation*}
\mathscr{E}\left(\frac{1}{\varepsilon_j^n(\omega)}\int_{z\in\mathbb{R}^n}\varphi\left(\frac{\abs{x-z-y_j(\omega)}}{\varepsilon_j(\omega)}\right)f(z) dz\right)\underset{j\to\infty}{\longrightarrow} f(x)
\end{equation*}
for almost every $x\in\mathbb{R}^n$, $f\in L^p(\mathbb{R}^n)$, $1\leq p\leq\infty$. Here $\mathscr{E}(X)$ denotes the expected value of the random variable $X$. We may write the above expression for the expectation using translation and mollification operators $(\tau_y g)(x)=g(x-y)$ and $M_\varepsilon g(x)=\tfrac{1}{\varepsilon^n}g(\tfrac{x}{\varepsilon})$, as $\mathscr{E}(\tau_{y_j(\omega)}M_{\varepsilon_j(\omega)}K * f(x))$ with $K(x)=\varphi(\abs{x})$. Which at least formally can be written as a convolution operator in $\mathbb{R}^n$ with kernel $\int_{\Omega}\tau_{y_j(\omega)}M_{\varepsilon_j(\omega)}K(x) d\mathscr{P}(\omega)$. In general, this new kernel is not of mollification type and the general results in \cite{Zo76} are useful to deal with them. In other words, the Calder\'on-Zygmund decomposition methods apply under some conditions on $\varphi$ and on the distributions of $\varepsilon_j$ and $y_j$.

The paper is organized as follows. Section~\ref{sec:PointwiseConvergence} is devoted to the pointwise convergence for continuous functions and to the pointwise convergence for general functions under the assumption of the weak type (1,1) of the maximal operators. In Section~\ref{sec:MaximalOperator} we deal with the estimates and boundedness properties of the maximal operator under different assumptions on $\varphi$, $\nu_j$, $\mu_j$ and $\Pi_j$ the joint distribution of $\varepsilon_j(\omega)$ and $y_j(\omega)$.

\section{Pointwise convergence}\label{sec:PointwiseConvergence}
Given a probability space $(\Omega,\mathscr{P})$, a sequence of random variables $X_j$ defined in $\Omega$ is said to converge in distribution, in law, or weakly to the random variable $X$ if the distribution $\mu_j=\mathscr{P}\circ X^{-1}_j$ converge vaguely to $\mu=\mathscr{P}\circ X^{-1}$, the distribution of $X$. The vague convergence coincides with the convergence of $\int g(x)d\mu_j(x)$ to $\int g(x)d\mu(x)$ for every bounded and continuous function $g(x)$. For real valued random variables, their distributions are probability measures on $\mathbb{R}$. In this case the vague convergence is equivalent to the convergence of $\mu_j((a,b])$ to $\mu((a,b])$ for $a$ and $b$ in a dense subset of $\mathbb{R}$. (See \cite{ChungBook}). In particular, a sequence $\varepsilon_j$ of random variables tends to $0$ in law when the distribution $\nu_j$ of $\varepsilon_j$ tends to $\delta_0$ vaguely. And the sequence of random vectors $y_j$ tends to $\mathbf{0}\in\mathbb{R}^n$ in law when $\mu_j$, the $n$ dimensional distribution sequence, tends to $\delta_{\mathbf{0}}$ vaguely.

Let us precise the above. For $j=1,2,\ldots$, let $\varepsilon_j:\Omega\to\mathbb{R}^+=\{\varepsilon>0\}$ be a sequence of positive random variables. Let $\nu_j(B)=\mathscr{P}(\varepsilon_j^{-1}(B))$, $B$ a Borel set in $\mathbb{R}$, be the distribution of $\varepsilon_j$. The convergence in law of $\varepsilon_j$ to $0$ is equivalent to the weak or vague convergence of $\nu_j$ to $\delta_0$, the Dirac unit mass at the origin $0\in\mathbb{R}$. Since the distribution $\mu_j$ of $y_j$ is given by $\mu_j(B)=\mathscr{P}(y_j^{-1}(B))$ for each Borel subset $B$ of $\mathbb{R}^n$, the convergence in law of $y_j$ to $\mathbf{0}\in\mathbb{R}^n$ is equivalent to the weak or vague convergence of $\mu_j$ to $\delta_{\mathbf{0}}$, the Dirac unit mass at the origin $\mathbf{0}\in\mathbb{R}^n$. 
The joint distribution of $\varepsilon_j$ and $y_j$ is given by $\Pi_j(E)= \mathscr{P}(\set{\omega\in\Omega: (\varepsilon_j(\omega),y_j(\omega))\in E})$ for every Borel sets $E$ of $\mathbb{R}^{n+1}$.

The integrability condition on the kernel profile $\varphi:\mathbb{R}^+\to \mathbb{R}^+\cup\set{0}$ is $\omega_n\int_0^\infty\rho^{n-1}\varphi(\rho)d\rho=1$, where $\omega_n$  is the surface measure of the unit sphere in $\mathbb{R}^n$. Set $K(x)=\varphi(\abs{x})$. Since we are dealing with the differentiation problem, the operators involved are positive, hence we can consider nonnegative measurable functions $f:\mathbb{R}^n\to\mathbb{R}$ in order to have well defined, for each $x\in\mathbb{R}^n$, the mean
\begin{equation*}
m_j(f)(x)=\mathscr{E}[(\tau_{y_j(\omega)} M_{\varepsilon_j(\omega)}K)* f(x)],
\end{equation*}
which could be infinite. Notice that in particular, if $f$ is continuous and bounded. Since for each $\omega\in\Omega$, $\varepsilon_j(\omega)>0$ we have that
\begin{equation*}
m_j(f) (x) = \mathscr{E}\left[\int_{z\in\mathbb{R}^n}\varphi(\abs{z})f(x-y_j(\omega)-\varepsilon_j(\omega)z) dz\right] \leq \norm{f}_\infty,
\end{equation*}
for every $x$.

\begin{proposition}\label{propo:MeanConvergence}
Assume that $\Pi_j$, the joint distribution of $(\varepsilon_j,y_j)$, converges vaguely to $\delta_{(0,\mathbf{0})}$, the Dirac unit mass at $(0,\mathbf{0})\in\mathbb{R}^{n+1}$. Let $g$ be a continuous and bounded function defined on $\mathbb{R}^n$. Then 
\begin{equation*}
m_j g(x) \to g(x)
\end{equation*}
for every $x\in\mathbb{R}^n$, when $j\to\infty$.
\end{proposition}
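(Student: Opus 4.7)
The plan is to rewrite $m_j g(x)$ as an integral against the joint distribution $\Pi_j$ on $\mathbb{R}^{n+1}$, and then exploit the hypothesis that $\Pi_j$ converges vaguely to $\delta_{(0,\mathbf{0})}$. For fixed $x\in\mathbb{R}^n$, set
\begin{equation*}
F_x(\varepsilon,y):=\int_{\mathbb{R}^n}\varphi(\abs{z})\,g(x-y-\varepsilon z)\,dz.
\end{equation*}
By the definition of the expectation and the pushforward of $\mathscr{P}$ by $(\varepsilon_j,y_j)$,
\begin{equation*}
m_j g(x)=\int_{\Omega}F_x\bigl(\varepsilon_j(\omega),y_j(\omega)\bigr)\,d\mathscr{P}(\omega)=\int_{\mathbb{R}^{n+1}}F_x(\varepsilon,y)\,d\Pi_j(\varepsilon,y).
\end{equation*}

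Next I would verify two elementary properties of $F_x$. Since $\int_{\mathbb{R}^n}\varphi(\abs{z})\,dz=\omega_n\int_0^\infty\rho^{n-1}\varphi(\rho)\,d\rho=1$, the estimate $\abs{F_x(\varepsilon,y)}\leq\norm{g}_\infty$ holds uniformly in $(\varepsilon,y)$. Continuity of $F_x$ at $(0,\mathbf{0})$ with value $g(x)$ follows from dominated convergence: pointwise in $z$ one has $g(x-y-\varepsilon z)\to g(x)$ as $(\varepsilon,y)\to(0,\mathbf{0})$ by continuity of $g$ at $x$, and the integrands are dominated by the integrable function $\norm{g}_\infty\varphi(\abs{z})$.

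Finally, given $\eta>0$, this continuity supplies an open neighborhood $V$ of $(0,\mathbf{0})$ in $\mathbb{R}^{n+1}$ on which $\abs{F_x(\varepsilon,y)-g(x)}<\eta$. Choose a continuous compactly supported $\psi\colon\mathbb{R}^{n+1}\to[0,1]$ with $\supp\psi\subset V$ and $\psi(0,\mathbf{0})=1$. Vague convergence of $\Pi_j$ then gives $\int\psi\,d\Pi_j\to\psi(0,\mathbf{0})=1$, so $\Pi_j(V)\to 1$ and hence $\Pi_j(V^c)\to 0$. Splitting the integral over $V$ and $V^c$,
\begin{equation*}
\abs{m_j g(x)-g(x)}\leq\int_V\abs{F_x-g(x)}\,d\Pi_j+\int_{V^c}\abs{F_x-g(x)}\,d\Pi_j\leq\eta+2\norm{g}_\infty\,\Pi_j(V^c),
\end{equation*}
and letting $j\to\infty$ followed by $\eta\to 0$ finishes the proof. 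The one delicate point is the gap between vague convergence (tested only against compactly supported functions) and the boundedness without compact support of $F_x$; the cut-off $\psi$ is the tightness device that bridges it, implicitly using that both $\Pi_j$ and $\delta_{(0,\mathbf{0})}$ are probability measures.
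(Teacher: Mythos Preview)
Your proof is correct and follows essentially the same route as the paper: rewrite $m_j g(x)$ (or rather $m_j g(x)-g(x)$) as an integral of a bounded function against $\Pi_j$ and then invoke the vague convergence $\Pi_j\to\delta_{(0,\mathbf{0})}$. The only cosmetic difference is that the paper works directly with $G(s,y)=\int\varphi(\abs{z})\abs{g(x-y-sz)-g(x)}\,dz$, asserts it is bounded and continuous on all of $\mathbb{R}^{n+1}$, and applies vague convergence (which in the paper's Section~2 is explicitly identified with testing against bounded continuous functions) in one stroke; your version proves continuity only at $(0,\mathbf{0})$ and supplies the tightness/cut-off argument to cover the possibility that ``vague'' means testing only against $C_c$, which is a harmless extra caution.
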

\begin{proof}
Since $\Pi_j$ is the distribution of the random vector $(\varepsilon_j,y_j)$ we have that
\begin{align*}
\abs{m_j g(x) - g(x)} &\leq \mathscr{E}\left[\int_{z\in\mathbb{R}^n}\varphi(\abs{z}) \abs{ g(x-y_j(\omega)-\varepsilon_j(\omega)z) - g(x)} dz\right]\\
&= \int_{(s,y)\in\mathbb{R}^{n+1}} \left(\int_{z\in\mathbb{R}^n}\varphi(\abs{z}) \abs{ g(x-y-sz) - g(x)} dz \right) d\Pi_j(s,y).
\end{align*}
Notice now that the function $G(s,y)=\int_{z\in\mathbb{R}^n}\varphi(\abs{z})\abs{g(x-y-sz)-g(x)}dz$ is continuous, bounded and $G(0,\mathbf{0})=0$. Hence $\abs{m_j(g)(x)-g(x)}\to 0$ as $j\to\infty$ from the weak convergence of $\Pi_j$ to $\delta_{(0,\mathbf{0})}$.
\end{proof}

The next result contains an elementary sufficient condition for the vague convergence of $\Pi_j$ to $\delta_{(0,\mathbf{0})}$.

\begin{lemma}
	Let $(\Omega,\mathscr{F},\mathscr{P})$ be a probability space. Let $\varepsilon_j(\omega)$ be a positive random variable distributed by $\nu_j$ for each $j=1,2,3,\dots$ Assume that $\nu_j$ converges vaguely to $\delta_0$, the Dirac delta at $0$. Let $y_j(\omega)$ be an $\mathbb{R}^n$ valued random vector distributed by $\mu_j$. If there exists a positive constant $C$ such that the inequality $\abs{y_j(\omega)}\leq C \varepsilon_j(\omega)$ almost surely for every $j$. Then the distribution $\Pi_j$ of the $\mathbb{R}^+\times\mathbb{R}^n$ valued random vector $Z_j(\omega)=(\varepsilon_j(\omega),y_j(\omega))$ converges vaguely to $\delta_{(0,\mathbf{0})}$, the Dirac delta in $\mathbb{R}^{n+1}$. Precisely, for any continuous and bounded function $g$ in $\mathbb{R}^{n+1}$ we have that
	\begin{equation*}
	\int_{\mathbb{R}}\idotsint_{\mathbb{R}^n} g(s,y) d\Pi_j(s,y)\longrightarrow g(0;\mathbf{0})
	\end{equation*}
	when $j$ tends to infinity.
\end{lemma}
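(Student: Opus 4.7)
The plan is to rewrite the integral against $\Pi_j$ as an expectation,
\[
\int_{\mathbb{R}}\idotsint_{\mathbb{R}^n} g(s,y)\,d\Pi_j(s,y) = \mathscr{E}\bigl[g(\varepsilon_j(\omega), y_j(\omega))\bigr],
\]
and to show that this expectation tends to $g(0,\mathbf{0})$ by splitting $\Omega$ according to the size of $\varepsilon_j(\omega)$. The hypothesis $\abs{y_j}\leq C\varepsilon_j$ almost surely is exactly what makes such a split effective: on the event $\set{\varepsilon_j\leq \delta}$ one automatically has $\abs{y_j}\leq C\delta$ a.s., so the random vector $(\varepsilon_j(\omega), y_j(\omega))$ is trapped in a Euclidean neighborhood of $(0,\mathbf{0})$ of radius $\sqrt{1+C^2}\,\delta$.

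Given $\eta>0$, I would first use continuity of $g$ at $(0,\mathbf{0})$ to pick $r>0$ with $\abs{g(s,y)-g(0,\mathbf{0})}<\eta$ whenever $s^2+\abs{y}^2<r^2$, and then set $\delta=r/\sqrt{1+C^2}$. On the event $\set{\varepsilon_j\leq \delta}$ the integrand deviates from $g(0,\mathbf{0})$ by less than $\eta$ almost surely; on the complementary event I simply bound it by $2\norm{g}_\infty$. This yields
\[
\bigl|\mathscr{E}[g(\varepsilon_j,y_j)] - g(0,\mathbf{0})\bigr| \leq \eta + 2\norm{g}_\infty\,\mathscr{P}(\varepsilon_j>\delta).
\]

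To finish I would need $\mathscr{P}(\varepsilon_j>\delta)\to 0$, which I would extract from $\nu_j\to\delta_0$ by testing against the bounded continuous bump $h(s)=\max\set{0,1-\abs{s}/\delta}$: since $h(0)=1$, vague convergence gives $\int h\,d\nu_j\to 1$, and $h\leq \mathbf{1}_{(-\delta,\delta)}$ forces $\nu_j((-\delta,\delta))\to 1$, which (because $\varepsilon_j>0$) is exactly $\mathscr{P}(\varepsilon_j<\delta)\to 1$. Taking $\limsup_j$ and then $\eta\to 0^+$ in the displayed inequality gives the claim.

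The main mild subtlety is the order of quantifiers: the radius $r$ coming from the continuity of $g$ must be fixed before $\delta$ is chosen, otherwise the almost sure bound $\abs{y_j}\leq C\varepsilon_j$ does not succeed in localizing $(\varepsilon_j,y_j)$ inside the prescribed neighborhood of $(0,\mathbf{0})$. With that ordering respected, the argument reduces to a standard splitting estimate combined with the definition of vague convergence, and no genuine difficulty remains.
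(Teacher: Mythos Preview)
Your proof is correct and follows essentially the same strategy as the paper: both arguments split according to whether $\varepsilon_j$ lies in a small neighborhood of $0$ (using $\abs{y_j}\leq C\varepsilon_j$ to automatically localize $y_j$ as well), bound the near part by continuity of $g$ and the far part by $2\norm{g}_\infty$ times a tail probability for $\varepsilon_j$, and then use $\nu_j\to\delta_0$ to kill that tail. The only cosmetic difference is that the paper invokes the interval characterization of vague convergence on $\mathbb{R}$ to obtain $\nu_j((h_1,h_2])\to 1$ for $h_1<0<h_2$ in a suitable dense set, whereas you test against a continuous bump to get $\nu_j((-\delta,\delta))\to 1$; either device works.
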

\begin{proof}
Let $D$ be a dense subset of $\mathbb{R}$ such that $\nu_j((a,b])\to\delta_0((a,b])$ for every choice of $a$ and $b$ in $D$ with $a<b$. Hence $\nu_j((a,b])\to 1$ if $a<0\leq b$ and $\nu_j((a,b])\to 0$ if $b<0$ or $a\geq 0$. Let $h_1<0<h_2$ be two real numbers in $D$. Then
	
	\begin{align*}
	&\int_{\mathbb{R}}\idotsint_{\mathbb{R}^n} \abs{g(s,y)-g(0,\mathbf{0})} d\Pi_j(s,y)\\
	&\leq \int_{(h_1,h_2]}\idotsint_{\overline{B}(\mathbf{0},Ch_2)} \abs{g(s,y)-g(0,\mathbf{0})} d\Pi_j(s,y)
	+ 2\norm{g}_\infty \Pi_j[\mathbb{R}^{n+1}\setminus(h_1,h_2]\times \overline{B}(\mathbf{0},Ch_2)],
	\end{align*}	
where $\overline{B}(\mathbf{0},Ch_2)]$ is the closure of teh ball  $B(\mathbf{0},Ch_2)]$.
For $\abs{h_1}$ and $h_2$ small enough, the first term on the right is as small as desired uniformly in $j$ because $g$ is continuous. For the second term, since $\Pi_j$ is the joint distribution of the random vector $(\varepsilon_j,y_j)$, we have
	\begin{align*}
	\Pi_j[\mathbb{R}^{n+1}\setminus (h_1,h_2]\times \overline{B}(\mathbf{0},Ch_2)] &=
	\mathscr{P}(\{(\varepsilon_j,y_j)\notin (h_1,h_2]\times \overline{B}(\mathbf{0},Ch_2)\})\\
	&= \mathscr{P}(\{\varepsilon_j\notin (h_1,h_2]  \textrm{ or } y_j\notin \overline{B}(\mathbf{0},Ch_2)\})\\
	&\leq \mathscr{P}(\{\varepsilon_j\notin (h_1,h_2]\}),
	\end{align*}
	since $\abs{y_j} > Ch_2$ implies $\varepsilon_j > h_2$. In other words,
	\begin{equation*}
	\Pi_j[\mathbb{R}^{n+1}\setminus(h_1,h_2]\times \overline{B}(\mathbf{0},Ch_2)] \leq 1-\nu_j((h_1,h_2]))
	\end{equation*}
	which tends to zero for $j\to +\infty$.
\end{proof}

In order to concentrate in Section~\ref{sec:MaximalOperator} the more technical aspects regarding the boundedness of the maximal operators, let us state and sketch the proof of the convergence result assuming the weak type of the underlying maximal operator.

Let $\varphi:\mathbb{R}^+\to\mathbb{R}^+\cup \{0\}$ with $\omega_n\int_0^\infty\rho^{n-1} \varphi(\rho) d\rho =1$. Let $\varepsilon_j$ be a sequence of positive random variables and $y_j$ a sequence of $n$ dimensional random vectors. For any measurable function $f$ the sublinear and positively homogeneous maximal operator
\begin{align*}
\mathscr{M}f(x) &= \sup_{j\geq 1} \mathscr{E}[\tau_{y_j}M_{\varepsilon_j}K * \abs{f}(x)]\\&=\sup_{j\geq 1} \mathscr{E}\left[\int_{z\in\mathbb{R}^n}\frac{1}{\varepsilon_j^n(\omega)}\varphi\left(\frac{\abs{x-y_j(\omega)-z}}{\varepsilon_j(\omega)}\right)\abs{f(z)}dz\right]
\end{align*}
is well defined. Of course with the above general conditions it could be identically equal to $+\infty$.

As usual we say that $\mathscr{M}$ is of weak type (1,1) if there exists a constant $A>0$ such that
\begin{equation*}
\abs{\set{x\in\mathbb{R}^n: \mathscr{M} f(x)>\lambda}} \leq\frac{A}{\lambda}\norm{f}_{L^1(\mathbb{R}^n)}
\end{equation*}
for every $\lambda >0$. Here the vertical bars denote the Lebesgue measure in $\mathbb{R}^n$.
For the sake of completeness we state and sketch the proof of the convergence theorem under the assumption of weak type (1,1) of $\mathscr{M}$.
\begin{theorem}
Let $\varphi\geq 0$ with $\int_{\mathbb{R}^n}\varphi(\abs{x})dx=1$. Assume that the joint distribution $\Pi_j$ of $\varepsilon_j$ and $y_j$, $j\in\mathbb{Z}^+$, converges weakly to $\delta_{(0,\mathbf{0})}$ and that $\mathscr{M}$ is of weak type (1,1). Then,  for every $f\in L^1(\mathbb{R}^n)$ we have that
\begin{equation*}
m_j(f)(x)\underset{j\to\infty}{\longrightarrow} f(x)
\end{equation*}
for almost every $x\in\mathbb{R}^n$.
\end{theorem}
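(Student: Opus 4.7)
The plan is to combine Proposition~\ref{propo:MeanConvergence} with the weak type $(1,1)$ hypothesis on $\mathscr{M}$ via the standard density argument for a.e.~convergence of sublinear operators. First I would record the pointwise sublinear domination $\abs{m_j(f)(x)}\leq \mathscr{M}f(x)$, which follows by pulling the absolute value inside the convolution and inside the expectation, and set
\[
\Lambda f(x)=\limsup_{j\to\infty}\abs{m_j(f)(x)-f(x)},
\]
so that the theorem reduces to showing $\Lambda f=0$ almost everywhere for every $f\in L^1(\mathbb{R}^n)$.

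Given $\eta>0$, by density of $C_c(\mathbb{R}^n)$ in $L^1(\mathbb{R}^n)$ I would pick $g\in C_c(\mathbb{R}^n)$ with $\norm{f-g}_{L^1}<\eta$. Since such a $g$ is continuous and bounded, Proposition~\ref{propo:MeanConvergence} yields $m_j(g)(x)\to g(x)$ for every $x$, hence $\Lambda g\equiv 0$. Writing $h=f-g$ and applying the triangle inequality together with the linearity of the convolution and of the expectation,
\[
\abs{m_j(f)(x)-f(x)}\leq \abs{m_j(g)(x)-g(x)}+\abs{m_j(h)(x)}+\abs{h(x)},
\]
and taking $\limsup_{j\to\infty}$ yields the pointwise bound $\Lambda f(x)\leq \mathscr{M}h(x)+\abs{h(x)}$.

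For any $\lambda>0$ this gives the inclusion
\[
\set{x\in\mathbb{R}^n:\Lambda f(x)>\lambda}\subseteq \set{x:\mathscr{M}h(x)>\tfrac{\lambda}{2}}\cup \set{x:\abs{h(x)}>\tfrac{\lambda}{2}}.
\]
The weak type $(1,1)$ of $\mathscr{M}$ bounds the Lebesgue measure of the first set by $(2A/\lambda)\norm{h}_{L^1}\leq (2A/\lambda)\eta$, while Chebyshev's inequality bounds the second by $(2/\lambda)\eta$. Letting $\eta\downarrow 0$ forces $\abs{\set{x:\Lambda f(x)>\lambda}}=0$ for every $\lambda>0$, whence $\Lambda f=0$ a.e. The argument presents no genuine obstacle once Proposition~\ref{propo:MeanConvergence} and the weak $(1,1)$ bound are available; the only point requiring care is the initial sublinear pointwise domination of $m_j$ by $\mathscr{M}$, which is exactly what licenses the splitting $f=g+h$. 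All nontrivial analytic work has been displaced to the verification of the weak type hypothesis, which is the subject of Section~\ref{sec:MaximalOperator}.
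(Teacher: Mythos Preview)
Your argument is correct and follows essentially the same route as the paper's own proof: approximate $f$ by a continuous compactly supported $g$, invoke Proposition~\ref{propo:MeanConvergence} on $g$, and control the remainder $h=f-g$ via the weak type $(1,1)$ of $\mathscr{M}$ together with Chebyshev. The only cosmetic difference is that you package the $\limsup$ as a functional $\Lambda f$ and make the sublinear domination $\abs{m_j(h)}\leq\mathscr{M}h$ explicit, whereas the paper works directly with the exceptional sets indexed by $k$.
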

\begin{proof}
For $f\in L^1(\mathbb{R}^n)$ and $g$ any continuous and compactly supported function on $\mathbb{R}^n$, from Proposition~\ref{propo:MeanConvergence} we have that the exceptional set of convergence for $f$ is the same as the exceptional set of convergence of $f-g$, hence
	\begin{align*}
	\left\vert\left\{\right.\right. x\in\mathbb{R}^n:& \,\mathscr{E}[(\varphi_{\varepsilon_j}* f)(x+y_j)]\nrightarrow f(x)\left.\right\}\left.\right\vert\\
	&= \abs{\set{x\in\mathbb{R}^n: \mathscr{E}[(\varphi_{\varepsilon_j}* (f-g))(x+y_j)]\nrightarrow (f-g)(x)}}\\
	&= \abs{\set{x\in\mathbb{R}^n: \limsup_j\abs{\mathscr{E}[(\varphi_{\varepsilon_j}* (f-g))(x+y_j)] - (f-g)(x)}>0}}\\
	&= \abs{\bigcup_{k\geq 1}\set{x\in\mathbb{R}^n: \limsup_j\abs{\mathscr{E}(\varphi_{\varepsilon_j}* (f-g))(x+y_j) - (f-g)(x)}>\frac{1}{k}}}.
	\end{align*}
	On the other hand, for each $k=1,2,3,\ldots$ from the weak type of $\mathscr{M}$ and Chebyshev inequality
	\begin{align*}
	&\abs{\set{x\in\mathbb{R}^n: \limsup_j\abs{\mathscr{E}(\varphi_{\varepsilon_j}* (f-g))(x+y_j) - (f-g)(x)}>\frac{1}{k}}}\\
	&\leq \abs{\set{x\in\mathbb{R}^n: \mathscr{M} (f-g)(x)>\frac{1}{2k}}} +
	\abs{\set{x\in\mathbb{R}^n: \abs{f(x)-g(x)}>\frac{1}{2k}}}\\
	&\leq 2k (A+1) \norm{f-g}_{L^1(\mathbb{R}^n)},
	\end{align*}
	since the continuous functions are dense in $L^1(\mathbb{R}^n)$ we have that each set
	\begin{equation*}
	\set{x\in\mathbb{R}^n:\limsup_j \abs{\mathscr{E}[(\varphi_{\varepsilon_j}* (f-g))(x+y_j)-(f-g)(x)]}>\frac{1}{k}}
	\end{equation*}
	has Lebesgue measure equal to zero and the theorem is proved.
\end{proof}

\section{The maximal operator}\label{sec:MaximalOperator}
In this section we deal with the estimates leading to the weak type (1,1) inequality for $\mathscr{M}$. Since $\mathscr{M}$ is bounded in $L^\infty(\mathbb{R}^n)$ Marcinckiewicz interpolation will provide the $L^p(\mathbb{R}^n)$ boundedness of $\mathscr{M}$ also for $1<p\leq\infty$. 


The next result contains some basic properties of the kernel of the operators $m_j(f)$.
\begin{proposition}\label{propo:propertieskerneloperators}
Let $\varphi:\mathbb{R}^+\cup\{0\}\to\mathbb{R}^+$ with $\int_0^\infty\rho^{n-1}\varphi(\rho) d\rho<\infty$. Let $\Pi_j$ be the distribution measure of $(\varepsilon_j,y_j)$. Then,
\begin{enumerate}[(\ref{propo:propertieskerneloperators}.a)]
	\item (random variance and random mean) $m_j(f)(x)=\int_{z\in\mathbb{R}^n}K_j(x-z)f(z) dz$, with
	\begin{equation*}
	K_j(x) = \int_{\mathbb{R}^+}\int_{\mathbb{R}^{n}}\frac{1}{s^n}\varphi\left(\frac{\abs{x-y}}{s}\right) d\Pi_j(s,y);
	\end{equation*}
	\item (random variance only) if $y_j\equiv 0$ for every $j\in\mathbb{Z}^+$, $K_j(x)=\psi_j(\abs{x})$ with $\psi(t)=\int_{\mathbb{R}^+}\frac{1}{s^n}\psi(\frac{t}{s}) d\nu_j(s)$;
	\item (random mean only) if $\varepsilon_j\equiv 0$ for every $j\in\mathbb{Z}^+$, $m_j(f)(x)=\int_{y\in \mathbb{R}^n} f(x-y) d\mu_j(y)$;
	\item (the case of self-similarity of $\Pi_j$) if $\Pi_j(E)=\Pi(jE)$ for every $j\in\mathbb{Z}^+$ and every Borel set $E$ in $\mathbb{R}^{n+1}$, $K_j(x)=j^n K(jx)$ with $K(x)=\int_{\mathbb{R}^+}\int_{\mathbb{R}^{n}}\frac{1}{s^n}\varphi(\frac{\abs{x-y}}{s}) d\Pi(s,y)$.
	\end{enumerate}
\end{proposition}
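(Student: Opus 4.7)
The plan is to verify each of the four items by a direct computation starting from the definition
\[
m_j(f)(x)=\mathscr{E}\!\left[(\tau_{y_j(\omega)}M_{\varepsilon_j(\omega)}K)*f(x)\right],
\]
the analytic workhorse being Tonelli's theorem, which applies freely because $f\ge 0$, $\varphi\ge 0$, and all the measures involved are nonnegative, so no dominating hypothesis is required.

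For (a), I would first expand the inner convolution pointwise in $\omega$:
\[
(\tau_{y}M_{s}K)*f(x)=\int_{z\in\mathbb{R}^n}\frac{1}{s^n}\varphi\!\left(\frac{\abs{x-z-y}}{s}\right)f(z)\,dz,
\]
substitute $s=\varepsilon_j(\omega)$, $y=y_j(\omega)$, and take expectations. Tonelli permits interchanging $\mathscr{E}$ with the $z$-integral, and the remaining expectation is by the very definition of the joint distribution $\Pi_j$ an integral against $d\Pi_j(s,y)$, which is precisely $K_j(x-z)$. Item (b) then falls out immediately: when $y_j\equiv\mathbf{0}$ the joint distribution factors as $d\Pi_j(s,y)=d\nu_j(s)\otimes d\delta_{\mathbf{0}}(y)$, so the $y$-integration collapses and the resulting kernel depends on $x$ only through $\abs{x}$, giving the stated $\psi_j$.

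Item (c) is most sensibly read as a formal degenerate case, since the paper's setup forces $\varepsilon_j>0$. Interpreting $\varepsilon_j\equiv 0$ as the mollifier $M_{\varepsilon_j}K$ collapsing to the Dirac mass, one has $(\tau_{y_j(\omega)}M_{0}K)*f(x)=f(x-y_j(\omega))$, and taking expectations against the distribution $\mu_j$ of $y_j$ yields $\int_{\mathbb{R}^n}f(x-y)\,d\mu_j(y)$. For item (d), the relation $\Pi_j(E)=\Pi(jE)$ says that $(\varepsilon_j,y_j)$ is distributed like $(\varepsilon/j,y/j)$ with $(\varepsilon,y)\sim\Pi$, so the change of variables $(u,v)=(js,jy)$ inside the representation of $K_j$ from (a) produces
\[
K_j(x)=\int_{\mathbb{R}^+\times\mathbb{R}^n}\frac{j^n}{u^n}\varphi\!\left(\frac{\abs{jx-v}}{u}\right)d\Pi(u,v)=j^n K(jx),
\]
as claimed.

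There is essentially no obstacle beyond measure-theoretic bookkeeping; the only genuine subtlety is the interpretation of (c), since the hypothesis $\varepsilon_j\equiv 0$ is incompatible with the standing positivity assumption on $\varepsilon_j$ and should be understood as the formal limit in which the mollifier becomes the identity of convolution. Everything else is Tonelli together with a change of variable.
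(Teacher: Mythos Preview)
Your proposal is correct and follows essentially the same approach as the paper: for (a) the paper likewise computes the expectation via the joint distribution $\Pi_j$ (your explicit invocation of Tonelli makes the interchange rigorous), (b) is deduced from (a) with $\Pi_j=\nu_j\times\delta_{\mathbf{0}}$, (c) is taken as a direct consequence of the definition of $m_j$, and for (d) the paper derives and applies the pushforward identity $\iint g(s,y)\,d\Pi_j=\iint g(s/j,y/j)\,d\Pi$, which is exactly your change of variables $(u,v)=(js,jy)$. Your cautionary remark on (c)---that $\varepsilon_j\equiv 0$ sits outside the standing assumption $\varepsilon_j>0$ and must be read formally---is a fair point that the paper does not make explicit.
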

\begin{proof}
In order to prove \textit{(\ref{propo:propertieskerneloperators}.a)} we only have to compute the expectation of the random variable $\frac{1}{\varepsilon_j^n(\omega)}\varphi\left(\frac{\abs{x-y_j(\omega)}}{\varepsilon_j(\omega)}\right)$ in term of the distribution $\Pi_j$ of the $n+1$ dimensional random vector $(\varepsilon_j(\omega), y_j(\omega))$. The formula in \textit{(\ref{propo:propertieskerneloperators}.b)} follows from \textit{(\ref{propo:propertieskerneloperators}.a)} with $\Pi_j=\nu_j\times \delta_{\mathbf{0}}$. The formula in \textit{(\ref{propo:propertieskerneloperators}.c)} follows directly from the definition of $m_j$.

Let us prove \textit{(\ref{propo:propertieskerneloperators}.d)}. Notice first that the identity $\Pi_j(E)=\Pi(jE)$ can be written as $\iint_{\mathbb{R}^{n+1}}\mathcal{X}_E(s,y) d\Pi_j(s,y)=\iint_{\mathbb{R}^{n+1}}\mathcal{X}_E(\frac{s}{j},\frac{y}{j}) d\Pi(s,y)$. From  this identity and standard arguments we have that $\iint_{\mathbb{R}^{n+1}}g(s,y) d\Pi_j(s,y)=\iint_{\mathbb{R}^{n+1}}g(\frac{s}{j},\frac{y}{j}) d\Pi(s,y)$, for $g\geq 0$ measurable. Taking, for fixed $x\in\mathbb{R}^n$, $g(s,y)=\frac{1}{s^n}\varphi(\frac{\abs{x-y}}{s})$ we have
	\begin{align*}
	K_j(x) &= \iint_{\mathbb{R}^{n+1}}\frac{1}{s^n}\varphi\left(\frac{\abs{x-y}}{s}\right) d\Pi_j(s,y)\\
	&= \iint_{\mathbb{R}^{n+1}}\left(\frac{j}{s}\right)^n\varphi\left(\frac{\abs{x-\frac{y}{j}}}{\frac{s}{j}}\right) d\Pi(s,y)\\
	&= j^n\iint_{\mathbb{R}^{n+1}}\frac{1}{s^n}\varphi\left(\frac{\abs{jx-y}}{s}\right) d\Pi(s,y)\\
	&= j^n K(jx).
	\end{align*}
\end{proof}

Since of the main results in \cite{Zo76} are the central tools of our analysis, we shall briefly recall them here.

The operator $\sup_{\alpha\in\Lambda}\abs{K_\alpha * f}$ is of weak type (1,1) if the integrals $ \int_{\mathbb{R}^n}\abs{K_\alpha(x)} dx$ are uniformly bounded for $\alpha\in\Lambda$ and $\int_{\abs{x}\geq 2\abs{z}}\sup_{\alpha\in\Lambda}\abs{K_\alpha(x-z)-K_\alpha(x)}dx$ are uniformly bounded in $z\in\mathbb{R}^n$. The classical conditions for $K_\lambda(x)=\lambda^n K(\lambda x)$ with $K$ positive and integrable, is given in terms of the size of the gradient of $K$; $\abs{\nabla K(x)}\leq\frac{C}{\abs{x}^{n+1}}$. On the other hand, the result contained in Theorem~4 in \cite{Zo76} provides an integrable function $f$ for each non lacunary sequence $\lambda_j$ such that
$\limsup_{j\to\infty}K_{\lambda_j}(f)(x)=+\infty$ almost everywhere when $K$ is unbounded and increasing in the interval $[0,1]$. In particular, singularity of the kernel outside the origin provide unbounded maximal operators. 

Let us start with the case of random variance (only) with probability measures $\Pi_j=\nu_j\times \delta_{\mathbf{0}}$ with $\nu_j$ self-similar, $j\in\mathbb{Z}^+$. 
\begin{theorem}\label{thm:CaseRVonlyselfsimilarity}
Let $\varphi:\mathbb{R}^+\to\mathbb{R}^+\cup \{0\}$ be a $\mathscr{C}^1$ function with $\int_{0}^{\infty}\rho^{n-1}\varphi(\rho) d\rho<\infty$ and $\abs{\varphi'(\rho)}\leq \frac{B}{\rho^{n+1}}$ for some constant $B$ and every $\rho$ positive. Let $\varepsilon_j$ be a sequence of random variables distributed according $\nu_j$. Assume that $\{\nu_j: j\in\mathbb{Z}^+\}$ are self-similar. Then the maximal operator
\begin{equation*}
\mathscr{M}f(x) =\sup_{j\geq 0} m_j(\abs{f})(x)
\end{equation*}
is of weak type (1,1).
\end{theorem}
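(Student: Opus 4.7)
The plan is to reduce the problem to the classical one-kernel setting and then invoke the Zó/Calderón–Zygmund machinery recalled just above the statement. Under the hypothesis of the theorem, $y_j\equiv 0$, so $\Pi_j=\nu_j\times\delta_{\mathbf{0}}$. Combining parts \textit{(\ref{propo:propertieskerneloperators}.b)} and \textit{(\ref{propo:propertieskerneloperators}.d)} of Proposition~\ref{propo:propertieskerneloperators} and the self-similarity $\nu_j(E)=\nu(jE)$, I would write
\begin{equation*}
m_j(f)(x)=(K_j*f)(x),\qquad K_j(x)=j^n K(jx),
\end{equation*}
where $K(x)=\psi(\abs{x})$ with $\psi(t)=\int_{\mathbb{R}^+}s^{-n}\varphi(t/s)\,d\nu(s)$. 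Thus $\mathscr{M}$ becomes the maximal operator associated with the dilations $K_j$ of a single radial profile $K$, and the task reduces to checking the two standing hypotheses for Zó's weak type $(1,1)$ theorem: (i) uniform $L^1$ bound $\int K_j =\int K<\infty$, and (ii) a uniform Hörmander-type integral condition, which here will follow from the pointwise bound $\abs{\nabla K(x)}\le B\abs{x}^{-(n+1)}$.

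For (i), a direct application of polar coordinates and Fubini yields
\begin{equation*}
\int_{\mathbb{R}^n}K(x)\,dx=\omega_n\int_{\mathbb{R}^+}\!\!\int_0^\infty s^{-n}t^{n-1}\varphi(t/s)\,dt\,d\nu(s)=\omega_n\int_0^\infty r^{n-1}\varphi(r)\,dr=1,
\end{equation*}
after the change of variable $r=t/s$ and using that $\nu$ is a probability measure.

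For (ii), the key computation is to differentiate under the integral:
\begin{equation*}
\abs{\nabla K(x)}=\abs{\psi'(\abs{x})}\le\int_{\mathbb{R}^+}s^{-(n+1)}\abs{\varphi'(\abs{x}/s)}\,d\nu(s)\le\int_{\mathbb{R}^+}s^{-(n+1)}\cdot\frac{B\,s^{n+1}}{\abs{x}^{n+1}}\,d\nu(s)=\frac{B}{\abs{x}^{n+1}},
\end{equation*}
where the crucial cancellation of the scale $s$ is produced precisely by the hypothesis $\abs{\varphi'(\rho)}\le B\rho^{-(n+1)}$. The mean value theorem then gives, for $\abs{x}>2\abs{z}$ and any point $w$ on the segment from $x-z$ to $x$, the bound $\abs{w}\ge\abs{x}/2$, so that
\begin{equation*}
\sup_{j\ge 1}\,j^n\bigl\lvert K(j(x-z))-K(jx)\bigr\rvert\le\sup_{j\ge 1}\,j^{n+1}\abs{z}\cdot\frac{B}{(j\abs{x}/2)^{n+1}}=\frac{2^{n+1}B\abs{z}}{\abs{x}^{n+1}},
\end{equation*}
which integrates over $\{\abs{x}>2\abs{z}\}$ to a constant independent of $z$. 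The classical result recalled before the statement (or equivalently the vector-valued Calderón–Zygmund approach) then delivers the weak type $(1,1)$ bound for $\mathscr{M}$.

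The only genuinely delicate step is the interchange of differentiation and the $d\nu$-integration that produces $\psi'$; but the hypothesis on $\varphi'$ gives an integrable majorant locally in $\abs{x}$, so dominated convergence applies and the step is routine. In summary, the whole argument rests on two ingredients: the scaling identity $K_j(x)=j^nK(jx)$ granted by self-similarity, and the fact that the pointwise gradient estimate for $\varphi$ is inherited by the averaged profile $\psi$ \emph{without loss of constant}, which is the analytic heart of the proof.
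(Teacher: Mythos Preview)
Your proof is correct and follows essentially the same route as the paper: reduce via self-similarity to $K_j(x)=j^nK_1(jx)$, verify $K_1\in L^1$ by Fubini and the change of variable $r=t/s$, and check the pointwise gradient bound $\lvert\nabla K_1(x)\rvert\le B\lvert x\rvert^{-(n+1)}$ by differentiating under the $d\nu$-integral and using the hypothesis on $\varphi'$. The only difference is that you go one step further and spell out the H\"ormander integral condition via the mean value theorem, whereas the paper simply invokes the ``gradient criterion'' for mollification kernels recalled before the statement; both are valid endpoints for the argument.
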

\begin{proof}
The self-similarity of $\nu_j$ implies the self-similarity  $\Pi_j=\nu_j\times\delta_{\mathbf{0}}$. 
Hence from \textit{(\ref{propo:propertieskerneloperators}.d)}, $K_j(x)=j^nK_1(jx)$. In order to apply the gradient criteria stated above, we are led to prove the integrability of $K_1$, 
\begin{align*}
\int_{\mathbb{R}^n}\abs{K_1(x)}dx &= \int_{\mathbb{R}^n}\int_{\mathbb{R}^+} \frac{1}{s^n}\varphi\left(\frac{\abs{x}}{s}\right) d\nu_1(s) dx\\&= \omega_n\int_0^\infty \rho^{n-1}\int_{\mathbb{R}^+}\frac{1}{s^n}\varphi\left(\frac{\rho}{s}\right) d\nu_1(s) d\rho\\
&=\omega_n \int_{\mathbb{R}^+}\frac{1}{s^n}\left(\int_0^\infty \rho^{n-1}\varphi\left(\frac{\rho}{s}\right)d\rho\right) d\nu_1(s)\\ 
&= \omega_n\int_0^\infty\rho^{n-1}\varphi(\rho) d\rho
\end{align*}
which is finite. Let us check that the gradient of $K_1$ has the desired size estimate. Let $i=1,2,\ldots,n$, for $\abs{x}\neq 0$ we can take the $i$-th partial derivative of $K_1(x)$ inside the integral. Hence
\begin{equation*}
\frac{\partial K_1}{\partial x_i}(x) = \int_{\mathbb{R}^+}\frac{1}{s^n}\varphi'\left(\frac{\abs{x}}{s}\right)\frac{1}{s}\frac{x_i}{\abs{x}} d\nu_1(s).
\end{equation*}
Then 
\begin{equation*}
\abs{\nabla K_1(x)}\leq B\int_{\mathbb{R}^+}\frac{1}{s^n}\left(\frac{s}{\abs{x}}\right)^{n+1}\frac{1}{s} d\nu(s)=\frac{B}{\abs{x}^{n+1}}
\end{equation*}
as desired.
\end{proof}

The next result shows that the hypothesis of self-similarity of $\nu_j$ in Theorem~\ref{thm:CaseRVonlyselfsimilarity} above can be avoided.
\begin{theorem}\label{thm:nujNoSelfSimilarity}
Let $\varphi$ be as in Theorem~\ref{thm:CaseRVonlyselfsimilarity}. Let $\varepsilon_j(\omega)$ be a positive random variable distributed according to $\nu_j$ and $y_j\equiv 0$, $j\in\mathbb{Z}^+$. Then the maximal operator $\mathscr{M}$ is of weak type (1,1).
\end{theorem}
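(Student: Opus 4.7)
The plan is to apply the Zó criterion recalled above directly to the family of kernels $\set{K_j:j\in\mathbb{Z}^+}$, bypassing the reduction $K_j(x)=j^n K_1(jx)$ which was available in Theorem~\ref{thm:CaseRVonlyselfsimilarity} only thanks to self-similarity. The whole point is that both quantities Zó demands---the uniform $L^1$-norm of the kernels and the uniform Hörmander integral of their translation differences---can be controlled from the pointwise hypothesis $\abs{\varphi'(\rho)}\leq B/\rho^{n+1}$ together with $\omega_n\int_0^\infty \rho^{n-1}\varphi(\rho)d\rho=1$, without any structural assumption on the sequence $\nu_j$ beyond each being a probability measure.

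First I would verify the uniform $L^1$ bound. By (\ref{propo:propertieskerneloperators}.b), $K_j(x)=\int_{\mathbb{R}^+}s^{-n}\varphi(\abs{x}/s)d\nu_j(s)$, and Fubini with polar coordinates (exactly as in the corresponding computation in Theorem~\ref{thm:CaseRVonlyselfsimilarity}, but with $\nu_1$ replaced by $\nu_j$) yields $\int_{\mathbb{R}^n}K_j(x)dx=\omega_n\int_0^\infty \rho^{n-1}\varphi(\rho)d\rho$, independently of $j$.

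Next I would derive the gradient estimate $\abs{\nabla K_j(x)}\leq B/\abs{x}^{n+1}$, again uniformly in $j$. For $\abs{x}\neq 0$, the bound $\abs{\varphi'}\leq B\rho^{-n-1}$ allows differentiation under the integral, giving $\partial_i K_j(x)=\int_{\mathbb{R}^+}s^{-n-1}\varphi'(\abs{x}/s)(x_i/\abs{x})d\nu_j(s)$; bounding the integrand in absolute value by $s^{-n-1}B(s/\abs{x})^{n+1}=B/\abs{x}^{n+1}$ and using $\nu_j(\mathbb{R}^+)=1$ yields the claim. From here the mean value theorem applied on the segment from $x$ to $x-z$, which lies entirely in $\set{\abs{\cdot}\geq\abs{x}/2}$ whenever $\abs{x}\geq 2\abs{z}$, gives $\abs{K_j(x-z)-K_j(x)}\leq C\abs{z}/\abs{x}^{n+1}$ with $C$ independent of $j$. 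Taking the supremum in $j$ and integrating over $\abs{x}\geq 2\abs{z}$ produces a bound proportional to $\abs{z}\cdot\abs{z}^{-1}$, hence uniform in $z\in\mathbb{R}^n$.

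With these two uniform estimates in hand, the Zó theorem recalled before the statement yields the weak type $(1,1)$ of $\sup_j\abs{K_j*f}=\mathscr{M}f$. I do not expect a serious obstacle here: the only subtle point is that the gradient bound must be uniform in $j$, and this is automatic because the integral in the differentiated expression is dominated, pointwise in $s$, by the $j$-free quantity $B/\abs{x}^{n+1}$, leaving only a probability measure to integrate.
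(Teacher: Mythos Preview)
Your proposal is correct and follows essentially the same route as the paper: verify the uniform $L^1$ bound on $K_j$ via Fubini, obtain a $j$-independent pointwise bound on the difference $\abs{K_j(x-z)-K_j(x)}$ of order $\abs{z}/\abs{x}^{n+1}$ from the hypothesis $\abs{\varphi'(\rho)}\leq B/\rho^{n+1}$ and the mean value theorem on the segment $[x,x-z]\subset\{\abs{\cdot}\geq\abs{x}/2\}$, integrate, and invoke Z\'o. The only cosmetic difference is that you first compute $\nabla K_j$ and then apply the mean value theorem to $K_j$, whereas the paper applies the mean value theorem to $\varphi(\abs{\cdot}/s)$ inside the $\nu_j$-integral; the resulting estimates coincide.
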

\begin{proof}
Let us show that
\begin{enumerate}[(a)]
	\item $\int_{\mathbb{R}^n}\abs{K_j(x)} dx$ are uniformly bounded in $j$; and
	\item $\int_{\abs{x}\geq 2\abs{z}}\sup_{j\geq 1}\abs{K_j(x-z)-K_j(x)} dx\leq A<\infty$ uniformly in $z\in\mathbb{R}^n$.
\end{enumerate}
Recall that $K_j(x)=\int_{\mathbb{R}^+}\frac{1}{s^n}\varphi\left(\frac{\abs{x}}{s}\right)d\nu_j(s)$. Let us check (a),
\begin{align*}
\int_{\mathbb{R}^n}\abs{K_j(x)} dx &= \int_{\mathbb{R}^+}\frac{1}{s^n}\int_{\mathbb{R}^n}\varphi\left(\frac{\abs{x}}{s}\right)dx d\nu_j(s)\\
&= \int_{\mathbb{R}^n}\varphi(\abs{x})dx\\
&= \omega_n\int_0^\infty\rho^{n-1}\varphi(\rho) d\rho. 
\end{align*}
In order to prove (b), fix $z\in\mathbb{R}^n$ with $\abs{z}>0$. Notice that for $\abs{x}\geq 2\abs{z}$ we have that  any $\xi$ in the segment joining $x-z$ and $x$ has length larger than or equal to $\tfrac{\abs{x}}{2}$. In fact, set $\xi=\theta(x-z)+(1-\theta)x$, with $0<\theta<1$, then 
\begin{align*}
\abs{x}&=\abs{\theta x + (1-\theta)x}\leq\abs{\theta(x-z)+(1-\theta)x}+\theta\abs{z}
=\abs{\xi}+\theta\abs{z}\\
&\leq\abs{\xi}+\abs{z}\leq\abs{\xi}+\frac{\abs{x}}{2}.
\end{align*}
Hence for $\abs{x}\geq 2\abs{z}$ we have
\begin{align*}
\abs{K_j(x-z)-K_j(x)} &\leq \int_{\mathbb{R}^+}\frac{1}{s^n}\abs{\varphi\left(\frac{\abs{x-z}}{s}\right)-\varphi\left(\frac{\abs{x}}{s}\right)} d\nu_j(s)\\&\leq\int_{\mathbb{R}^+}\frac{1}{s^n}\abs{\varphi'\left(\frac{\abs{\xi(x,z)}}{s}\right)} \frac{\abs{z}}{s} d\nu_j(s) \\ &\leq B\abs{z} \int_{\mathbb{R}^+}\frac{1}{s^{n+1}}\left(\frac{s}{\abs{\xi(x,z)}}\right)^{n+1} d\nu_j(s)\\
&\leq 2^n B\frac{\abs{z}}{\abs{x}^{n+1}}.
\end{align*}
Since $\int_{\abs{x}\geq 2\abs{z}}\frac{\abs{z}}{\abs{x}^{n+1}} dx =\omega_n\abs{z}\int_{2\abs{z}}^\infty\frac{d\rho}{\rho^2}=\frac{\omega_n}{2}$ we have (b) and the proof of the theorem.
\end{proof}

Let us now consider some particular instances of non identically vanishing means $y_j(\omega)$.

Let us start from the case \textit{(\ref{propo:propertieskerneloperators}.c)} in Proposition~\ref{propo:propertieskerneloperators}. In this case the operator $m_j(f)$ is given by $m_j(f)(x)=\int_{\mathbb{R}^n}f(x-y) d\mu_j(y)$ where $\mu_j=\mathscr{P}\circ y_j^{-1}$ is the $n$ dimensional distribution of the random means $y_j$. Of course that there is a large class of cases of sequences $\set{\mu_j: j\in\mathbb{Z}^+}$ that produce a well behaved maximal operator. But in general the behavior of the maximal operator in this case is far away from being good. At this point Theorem~4 in \cite{Zo76} helps to build self-similar probability measures such that the differentiation $m_j(f)(x)\to f(x)$ a.e., fails. For the sake of completeness let us rephrase Theorem~4 in \cite{Zo76} in our current notation.
\begin{theorem}\label{thm:Theorem4Zocurrent}
Let $n=1$. Assume that $\mu_j(B)=\mu_1(jB)$; $j\in\mathbb{Z}^+$ with $d\mu_1(y)=\gamma(y) dy$ with $\gamma\in L^1(\mathbb{R})$, $\gamma$ unbounded and nondecreasing in $(0,1)$. Then, there exists an integrable function $f$ in $\mathbb{R}$ such that
\begin{equation*}
\limsup_{j\to\infty} m_j(f)(x)= +\infty
\end{equation*}
almost everywhere.
\end{theorem}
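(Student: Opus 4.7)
This theorem restates Theorem~4 of \cite{Zo76} in the present notation; my plan is to follow the Zó construction. First I would reduce $m_j$ to a convolution with an explicit dilation kernel: applying \textit{(\ref{propo:propertieskerneloperators}.c)} gives $m_j(f)(x)=\int_{\mathbb{R}} f(x-y)\,d\mu_j(y)$, and the self-similarity $\mu_j(B)=\mu_1(jB)$ combined with $d\mu_1(y)=\gamma(y)\,dy$ yields $d\mu_j(y)=j\gamma(jy)\,dy$, hence
\begin{equation*}
m_j(f)(x)=(K_j*f)(x),\qquad K_j(y)=j\gamma(jy).
\end{equation*}
Each $K_j$ is a probability density on $\mathbb{R}$ and the family $\{m_j\}$ commutes with translations, a property that will be essential later.

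Next I would localize the singularity of $\gamma$. Because $\gamma$ is nondecreasing and unbounded on $(0,1)$ and $\gamma\in L^1(\mathbb{R})$, necessarily $\gamma(y)\to\infty$ as $y\uparrow 1$. Setting $\eta_M=1-\inf\{y\in(0,1):\gamma(y)\geq M\}$ gives $\eta_M\downarrow 0$ and $\gamma\geq M$ on $[1-\eta_M,1)$, so $K_j\geq jM$ on $I_{j,M}:=[(1-\eta_M)/j,\,1/j]$. This identifies the essential obstruction: a definite portion of the mass of $K_j$ lies near the point $1/j$ rather than near $0$, so $\{K_j\}$ behaves as a shifted approximation of identity and the operator does not concentrate at the origin.

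The core of the proof is then to produce $f\in L^1(\mathbb{R})$ with $\limsup_j m_j(f)(x)=+\infty$ for almost every $x$. Following \cite{Zo76}, I would first show that the maximal operator $\mathscr{M}$ fails the weak-type $(1,1)$ inequality, by testing it against superpositions of narrow indicator functions placed to align with the shifted spike of some $K_{j_k}$ at the point $1/j_k$, so that the lower bound
\begin{equation*}
m_{j_k}(f)(x)\geq j_k M_k\int_{I_{j_k,M_k}}f(x-y)\,dy
\end{equation*}
produces superlevel sets $\{x:\mathscr{M} f(x)>\lambda\}$ of measure far larger than $\lambda^{-1}\|f\|_1$. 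Then, since $\mathscr{M}$ is translation-invariant and sublinear, I would invoke Stein's maximal principle (or carry out a direct translate-summing argument) to promote this failure to the existence of a concrete $f\in L^1$ with $\limsup_j m_j(f)(x)=+\infty$ almost everywhere.

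The hard part will be the quantitative constraint $M\eta_M\leq\int_{1-\eta_M}^1\gamma\leq\|\gamma\|_1$, which forces the contribution of a single spike to be only bounded per unit of $L^1$ mass of $f$. The divergence must therefore be assembled from the contributions of infinitely many scales accumulating at each point, and designing a bump arrangement so that this accumulation occurs on a set of full measure, rather than merely on a set of positive measure, is the delicate combinatorial step at the heart of the construction in \cite{Zo76}.
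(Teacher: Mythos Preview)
The paper does not supply a proof of this theorem: it is introduced as a rephrasing of Theorem~4 in \cite{Zo76} (``For the sake of completeness let us rephrase Theorem~4 in \cite{Zo76} in our current notation''), stated, and then used as a cautionary example without further argument. Your proposal therefore goes beyond what the paper does; there is nothing to compare against on the paper's side.

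As an outline of Z\'o's construction your sketch is broadly faithful. The reduction $m_j(f)=K_j*f$ with $K_j(y)=j\gamma(jy)$ is correct, and your localization of the singularity at $y\uparrow 1$ (hence of the mass of $K_j$ near $1/j$) identifies the right mechanism: the kernels behave like approximate identities shifted away from the origin. Two cautions. First, Stein's maximal principle in its classical form is stated on compact groups (or for $L^2$ on locally compact groups), so on $\mathbb{R}$ you will in fact need the direct translate-summing construction you mention rather than an off-the-shelf citation. Second, the ``hard part'' you flag, namely that $M\eta_M\leq\|\gamma\|_1$ limits each single-scale contribution, is precisely where the non-lacunarity of the sequence $\{j\}$ enters in \cite{Zo76}: one needs sufficiently many intermediate scales so that, for a fixed bump in $f$, the intervals $I_{j,M}$ sweep densely enough to cover a set of full measure as $j$ ranges. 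If you intend to write out the proof rather than cite \cite{Zo76}, that combinatorial covering step is the one that requires care.
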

The above results show that in some sense the random character of the variance is less restrictive that the random character of the mean for the differentiation properties of $\mathscr{E}[\tau_{y_j} M_{\varepsilon_j}K * f]$. Let us consider now some cases where both $\{\varepsilon_j\}$ and $\{y_j\}$ are nontrivial. We start by providing some sufficient conditions on the joint distribution of $\varepsilon_j$ and $y_j$ for the weak type of $\mathscr{M}$ when the profile $\varphi(t)=\mathcal{X}_{(0,1)}(t)$ is the standard for the Lebesgue differentiation theorem through centered Euclidean balls.
\begin{theorem}\label{thm:SufficientConditionsonJointDistributionforWeakType}
Let  $\varphi(\rho)=\mathcal{X}_{(0,1)}(\rho)$. Assume that $\Pi_j(E)=\Pi_1(jE)$, $j\in\mathbb{Z}^+$ that $\supp\Pi_1\subset [0,\alpha]\times \overline{B}(0,\beta)$ and that $d\Pi_1(s,y)=\gamma(s,y) dsdy$ with $\gamma\in L^1(\mathbb{R}^+,L^\infty(\mathbb{R}^n))$. Then $\mathscr{M}(f)$ is dominated by the standard Hardy-Littlewood maximal function $f^*$ defined on the centered balls of $\mathbb{R}^n$.
\end{theorem}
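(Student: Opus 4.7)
\medskip
\noindent\emph{Proof plan.}
The plan is to exploit the self-similarity hypothesis via item \textit{(\ref{propo:propertieskerneloperators}.d)} of Proposition~\ref{propo:propertieskerneloperators} in order to reduce the estimate to a single kernel $K=K_1$, and then to show that $K$ is bounded and has compact support. Once this is done, each $K_j$ is dominated pointwise by a constant multiple of a normalized indicator of a centered ball of radius $(\alpha+\beta)/j$, and convolution against such a kernel is trivially majorized by the Hardy--Littlewood maximal function $f^*$ on centered Euclidean balls.

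More precisely, by \textit{(\ref{propo:propertieskerneloperators}.d)} we have $K_j(x)=j^nK(jx)$ with
\begin{equation*}
K(x)=\int_{\mathbb{R}^+}\int_{\mathbb{R}^n}\frac{1}{s^n}\mathcal{X}_{\set{\abs{x-y}<s}}(y)\,\gamma(s,y)\,ds\,dy.
\end{equation*}
Since $\supp\gamma\subset[0,\alpha]\times\overline{B}(\mathbf{0},\beta)$, the combined conditions $\abs{x-y}<s\leq\alpha$ and $\abs{y}\leq\beta$ force $\abs{x}\leq\alpha+\beta$, so $K$ is supported in $\overline{B}(\mathbf{0},\alpha+\beta)$. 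For the pointwise bound on $K$, the elementary inequalities $\gamma(s,y)\leq\norm{\gamma(s,\cdot)}_{L^\infty(\mathbb{R}^n)}=:h(s)$ and $\abs{\set{y\in\mathbb{R}^n:\abs{x-y}<s}}=c_ns^n$ (with $c_n$ the volume of the unit ball) give
\begin{equation*}
K(x)\leq\int_0^\alpha\frac{h(s)}{s^n}\cdot c_n s^n\,ds=c_n\norm{\gamma}_{L^1(\mathbb{R}^+,L^\infty(\mathbb{R}^n))}=:C_0<\infty.
\end{equation*}

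With this in hand, $K_j(x)\leq C_0\, j^n\mathcal{X}_{\overline{B}(\mathbf{0},(\alpha+\beta)/j)}(x)$ for every $j\in\mathbb{Z}^+$, so
\begin{equation*}
m_j(\abs{f})(x)=K_j*\abs{f}(x)\leq C_0 j^n\int_{\overline{B}(x,(\alpha+\beta)/j)}\abs{f(z)}\,dz
\leq C_0 c_n(\alpha+\beta)^n\,f^*(x),
\end{equation*}
where the last step uses $j^n\abs{\overline{B}(x,(\alpha+\beta)/j)}=c_n(\alpha+\beta)^n$. Taking the supremum in $j$ yields $\mathscr{M}f(x)\leq C_0 c_n(\alpha+\beta)^n\,f^*(x)$, which is the claimed pointwise domination.

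The only non-routine step is the pointwise boundedness of $K$: it relies crucially on the cancellation between the factor $s^{-n}$ coming from the mollifier and the fact that the set over which $\gamma$ is integrated in $y$ has $n$-dimensional Lebesgue measure at most $c_ns^n$ (since $\varphi$ is the indicator of the unit ball and $\gamma$ lives in $L^\infty$ in the $y$ variable). Without the $L^\infty$ control in $y$ one would be forced to bound a full mass integral of $\gamma(s,\cdot)$ by a $y$-independent quantity and lose the $s^{-n}$, which is exactly the obstruction neutralized by the hypothesis $\gamma\in L^1(\mathbb{R}^+,L^\infty(\mathbb{R}^n))$.
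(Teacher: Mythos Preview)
Your proof is correct and follows essentially the same approach as the paper: both use the self-similarity via \textit{(\ref{propo:propertieskerneloperators}.d)} to reduce to $K_1$, then show $K_1$ is bounded by $V_n\norm{\gamma}_{L^1(\mathbb{R}^+,L^\infty(\mathbb{R}^n))}$ and supported in $\overline{B}(\mathbf{0},\alpha+\beta)$. You actually spell out the final passage from a bounded compactly supported kernel to domination by $f^*$ more explicitly than the paper, which simply states ``and the result is proved'' after obtaining $K_1(x)\leq \norm{\gamma}_{L^1(\mathbb{R}^+,L^\infty(\mathbb{R}^n))}\mathcal{X}_{(0,\alpha+\beta)}(\abs{x})$.
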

\begin{proof}
From \textit{(\ref{propo:propertieskerneloperators}.d)} in Proposition~\ref{propo:propertieskerneloperators} we have to estimate the kernel $K_1$. From \textit{(\ref{propo:propertieskerneloperators}.a)}
\begin{align*}
K_1(x) &= \int_{\mathbb{R}^+}\int_{\mathbb{R}^n}\frac{1}{s^n}\varphi\left(\frac{\abs{x-y}}{s}\right) d\Pi_1(s,y)\\
&= \int_{\mathbb{R}^+}\int_{\mathbb{R}^n}\frac{1}{s^n}\mathcal{X}_{(0,1)}\left(\frac{\abs{x-y}}{s}\right) d\Pi_1(s,y)\\
&= \int_{\mathbb{R}^+}\frac{1}{s^n}\int_{B(x,s)}\gamma(s,y)dy ds\\
&= \int_0^\alpha\frac{1}{s^n}\int_{B(x,s)\cap B(0,\beta)}\gamma(s,y)dy ds.
\end{align*}
So that $K_1(x)\leq V_n\norm{\gamma}_{L^1(\mathbb{R}^+,L^\infty(\mathbb{R}^n))}$ with $V_n$ the volumen of unit ball of $\mathbb{R}^n$. On the other hand, for $\abs{x}>\alpha + \beta$ we have that $B(x,s)\cap B(0,\beta)=\emptyset$ and $K_1(x)=0$. Hence $K_1(x)\leq \norm{\gamma}_{L^1(\mathbb{R}^+,L^\infty(\mathbb{R}^n))}\mathcal{X}_{(0,\alpha+\beta)}(\abs{x})$ and  the result is proved.
\end{proof}

Again, the self-similarity of the measures $\Pi_j$ can be substituted by more general conditions. Notice that under the hypotheses of Theorem~\ref{thm:SufficientConditionsonJointDistributionforWeakType} we have that $d\Pi_j(s,y)=\gamma_j(s,y)dsdy$ with $\gamma_j(s,y)=j^{n+1}\gamma(js,jy)$. So that the boundedness of the density $\gamma$ implies that $\norm{\gamma_j}_\infty=j^{n+1}\norm{\gamma}_\infty$ and $\supp\Pi\subseteq [0,\alpha]\times \overline{B}(0,\beta)$ implies  $\supp\Pi_j\subseteq [0,\tfrac{\alpha}{j}]\times \overline{B}(0,\tfrac{\beta}{j})$.
\begin{theorem}\label{thm:BoundednessgammaImpliesPointwiseSupremum}
Let $\varphi=\mathcal{X}_{(0,1)}$. Assume that $d\Pi_j=\gamma_j ds dy$, $\supp \gamma_j\subseteq [0,r_j]\times B(0,r_j)$ and $\norm{\gamma_j}_\infty\leq\frac{A}{r_j^{n+1}}$. Then $\mathscr{M}(f)(x)$ is bounded by a constant time $f^*(x)$ for every $x\in\mathbb{R}^n$.
\end{theorem}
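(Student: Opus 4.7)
The plan is to mimic the computation used in Theorem~\ref{thm:SufficientConditionsonJointDistributionforWeakType}, but tracking the explicit dependence on $r_j$ that now replaces the role played there by the self-similarity parameter, and then to exhibit a uniform pointwise domination of $K_j$ by a constant multiple of $r_j^{-n}\mathcal{X}_{B(\mathbf{0},2r_j)}$. Once that is established, the bound $\mathscr{M}f(x)\leq C\,f^*(x)$ is immediate because convolution with a kernel supported on a ball of radius $2r_j$ with $L^\infty$ norm of order $r_j^{-n}$ is dominated by the average of $\abs{f}$ over $B(x,2r_j)$.

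First, using \textit{(\ref{propo:propertieskerneloperators}.a)} with $\varphi=\mathcal{X}_{(0,1)}$, I would write
\begin{equation*}
K_j(x)=\int_{0}^{r_j}\frac{1}{s^n}\int_{B(x,s)\cap B(\mathbf{0},r_j)}\gamma_j(s,y)\,dy\,ds.
\end{equation*}
Since $\norm{\gamma_j}_\infty\leq A/r_j^{n+1}$, the inner integral is bounded by $A\,V_n s^n/r_j^{n+1}$, so that $s^{-n}$ times the inner integral is bounded by $A\,V_n/r_j^{n+1}$ uniformly in $s$. Integrating in $s\in(0,r_j)$ yields $K_j(x)\leq A\,V_n/r_j^{n}$.

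Next, I would observe that $K_j(x)=0$ outside $\overline{B}(\mathbf{0},2r_j)$. Indeed, $\supp\gamma_j\subseteq[0,r_j]\times B(\mathbf{0},r_j)$ forces both $s\leq r_j$ and $\abs{y}\leq r_j$; on the other hand $\varphi(\abs{x-y}/s)\neq 0$ requires $\abs{x-y}<s\leq r_j$, so by the triangle inequality $\abs{x}<2r_j$. Combining the two bullets I obtain the pointwise estimate
\begin{equation*}
K_j(x)\leq \frac{A\,V_n}{r_j^{n}}\,\mathcal{X}_{\overline{B}(\mathbf{0},2r_j)}(x).
\end{equation*}

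Finally, for any nonnegative $f$,
\begin{equation*}
m_j(\abs{f})(x)=(K_j * \abs{f})(x)\leq \frac{A\,V_n}{r_j^{n}}\int_{B(x,2r_j)}\abs{f(y)}\,dy =A\,V_n\,2^n\cdot\frac{1}{(2r_j)^n}\int_{B(x,2r_j)}\abs{f(y)}\,dy,
\end{equation*}
and the last quantity is bounded by $A\,V_n^{2}\,2^n f^*(x)$. Taking the supremum over $j$ gives $\mathscr{M}f(x)\leq C f^*(x)$ with $C=2^n A V_n^2$. No step here looks like a serious obstacle: the only mildly delicate point is keeping the two-sided condition $s\leq r_j$ and $\abs{y}\leq r_j$ in play simultaneously to get both the $r_j^{-n}$ pointwise bound and the $2r_j$ support bound, which is what makes the two competing $r_j$'s cancel out and produce a scale-free domination by $f^*$.
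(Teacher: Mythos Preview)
Your proof is correct and follows essentially the same route as the paper: write $K_j(x)=\int_0^{r_j}s^{-n}\int_{B(x,s)\cap B(\mathbf{0},r_j)}\gamma_j(s,y)\,dy\,ds$, use $\norm{\gamma_j}_\infty\leq A/r_j^{n+1}$ together with $\abs{B(x,s)}=V_n s^n$ to get $K_j(x)\leq A V_n/r_j^n$, observe that $K_j$ is supported in $\overline{B}(\mathbf{0},2r_j)$, and conclude $\mathscr{M}f(x)\leq 2^n A V_n^2 f^*(x)$. Even the final constant coincides with the paper's.
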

\begin{proof}
Since $K_j(x)=\int_0^{r_j}\frac{1}{s^n}\int_{B(x,s)\cap B(0,r_j)}\gamma_j(s,y)dyds$, we see that if $\abs{x}\geq 2 r_j$, then $K_j(x)=0$. That is, $\supp K_j\subseteq B(0,2r_j)$. On the other hand,
\begin{equation*}
K_j(x) \leq \norm{\gamma_j}_\infty\int_0^{r_j}\frac{1}{s^n}\abs{B(x,s)\cap B(0,r_j)} ds
\leq V_n\frac{A}{r_j^{n+1}}r_j
=\frac{V_n A}{r_j^n},
\end{equation*}
for every $x\in\mathbb{R}^n$. Hence
\begin{equation*}
\sup_{j\geq 1} (K_j * f)(x) \leq V_n A\sup_{j\geq 1} \frac{1}{r_j^n}\int_{B(x,2r_j)}\abs{f(z)} dz,
\end{equation*}
and $\mathscr{M}f(x)\leq 2^n A V_n^2 f^*(x)$.
\end{proof}
Let us finally provide sufficient conditions on $\varepsilon_j$, $y_j$ in order to have the weak type of $\mathscr{M}$ when $\varphi$ in not localized, now with $y_j\neq 0$. In the proof of our result we shall use the following corollary of Theorem~\ref{thm:BoundednessgammaImpliesPointwiseSupremum}.
\begin{corollary}\label{coro:ofTheoremBoundednessgammaImpliesPointwiseSupremum}
Let $\varphi=\varphi_{\alpha,\beta}=\alpha\mathcal{X}_{(0,\beta)}$ with $\alpha$ and $\beta$ positive. Assume that $\Pi_j$ satisfies the hypotheses of  Theorem~\ref{thm:BoundednessgammaImpliesPointwiseSupremum}. Then
\begin{equation*}
\mathscr{M}(f)(x)\leq \alpha\beta^n(\beta+1)^{n}A\,V_n^2 f^*(x)
\end{equation*} 
for every $x\in\mathbb{R}^n$.
\end{corollary}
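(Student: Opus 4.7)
The plan is to mimic the proof of Theorem~\ref{thm:BoundednessgammaImpliesPointwiseSupremum} with the scaled profile $\varphi_{\alpha,\beta}$ in place of $\mathcal{X}_{(0,1)}$, keeping careful track of how the radial scale $\beta$ enlarges balls and how the height $\alpha$ enters as an overall multiplicative constant. The key identification is that
\begin{equation*}
\alpha\,\mathcal{X}_{(0,\beta)}\!\left(\frac{\abs{x-y}}{s}\right) = \alpha\,\mathcal{X}_{B(x,\beta s)}(y),
\end{equation*}
so using $\varphi_{\alpha,\beta}$ in the kernel formula \textit{(\ref{propo:propertieskerneloperators}.a)} has the effect of enlarging the integration ball in $y$ by the factor $\beta$ and multiplying the kernel by $\alpha$.

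I would first write out $K_j$ from \textit{(\ref{propo:propertieskerneloperators}.a)}, use $\supp\gamma_j\subseteq[0,r_j]\times B(\mathbf{0},r_j)$ to cut the $s$-integration to $[0,r_j]$, and bound the density by $\norm{\gamma_j}_\infty\leq A/r_j^{n+1}$:
\begin{align*}
K_j(x) &= \alpha\int_0^{r_j}\frac{1}{s^n}\int_{B(x,\beta s)\cap B(\mathbf{0},r_j)}\gamma_j(s,y)\,dy\,ds\\
&\leq \alpha\norm{\gamma_j}_\infty\int_0^{r_j}\frac{V_n(\beta s)^n}{s^n}\,ds = \alpha V_n\beta^n\norm{\gamma_j}_\infty r_j \leq \frac{\alpha V_n\beta^n A}{r_j^n}.
\end{align*}
Next I would pin down the support of $K_j$: if $\abs{x}>(\beta+1)r_j$ then for every $s\in[0,r_j]$ and every $y\in B(\mathbf{0},r_j)$ one has $\abs{x-y}\geq \abs{x}-\abs{y}>\beta r_j\geq \beta s$, so the inner indicator vanishes and $K_j(x)=0$; hence $\supp K_j\subseteq \overline{B}(\mathbf{0},(\beta+1)r_j)$. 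Combining the size and support estimates and inserting the volume $\abs{B(x,(\beta+1)r_j)}=V_n(\beta+1)^n r_j^n$,
\begin{equation*}
(K_j*\abs{f})(x) \leq \frac{\alpha V_n\beta^n A}{r_j^n}\int_{B(x,(\beta+1)r_j)}\abs{f(z)}\,dz \leq \alpha\beta^n(\beta+1)^n A\,V_n^2\, f^*(x).
\end{equation*}
Taking the supremum over $j\geq 1$ gives the stated pointwise bound $\mathscr{M}(f)(x)\leq \alpha\beta^n(\beta+1)^n A\,V_n^2\, f^*(x)$.

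I do not anticipate any serious obstacle; the argument is structurally identical to that of Theorem~\ref{thm:BoundednessgammaImpliesPointwiseSupremum}, and the only real care required is the bookkeeping of constants. The $\beta^n$ factor arises from the enlarged $s$-slice $B(x,\beta s)$, the $(\beta+1)^n$ factor from the enlarged support $\overline{B}(\mathbf{0},(\beta+1)r_j)$ of $K_j$, the $\alpha$ is inherited directly from the height of the profile, and $A\,V_n^2$ appears exactly as in the base theorem.
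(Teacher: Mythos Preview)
Your proposal is correct and follows essentially the same approach as the paper: bound $K_j$ pointwise by $\alpha\beta^n V_n A/r_j^n$ via the enlarged ball $B(x,\beta s)$, show $K_j$ is supported in $\overline{B}(\mathbf{0},(\beta+1)r_j)$, and then dominate the convolution by a constant times the average over that ball. Your bookkeeping of the constants and your support argument are both slightly more explicit than the paper's, but the structure is identical.
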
 
\begin{proof}
Set $K_j^{\alpha,\beta}(x)$ to denote the kernel defined by $\varphi_{\alpha,\beta}$ and $\Pi_j$. Then 
\begin{align*}
K_j^{\alpha,\beta}(x) &= \int_{\mathbb{R}^+}\int_{\mathbb{R}^n}\frac{1}{s^n}\alpha \mathcal{X}_{(0,\beta)}\left(\frac{\abs{x-y}}{s}\right) d\Pi_j(s,y)\\
&\leq  \alpha\norm{\gamma_j}_\infty\int_0^{r_j}\int_{B(x,s\beta)\cap B(0,r_j)}\frac{1}{s^n} dy ds\\
&\leq\alpha\norm{\gamma_j}_\infty\int_0^{r_j}\frac{\abs{B(x,s\beta)}}{s^n}ds\\
&= \alpha\beta^n V_n\norm{\gamma_j}_\infty r_j\\
&\leq \alpha\beta^n V_n\frac{A}{r_j^n}.
\end{align*}
On the other hand, $K_j^{\alpha,\beta}(x)$ vanishes for $\abs{x}\geq (\beta+1)r_j$. In fact, for this values of $x$ we have that $B(x,s\beta)\cap B(0,r_j)=\emptyset$. Then $K_j^{\alpha,\beta}(x)\leq \alpha\beta^n V_n\frac{A}{r_j^n}\mathcal{X}_{B(0,(\beta+1)r_j)}(x)=\alpha\beta^n(\beta+1)^{n} V_n^2A\frac{1}{\abs{B(0,(\beta+1)r_j)}}\mathcal{X}_{B(0,(\beta+1)r_j)}(x)$ and we are done.
\end{proof}

\begin{theorem}\label{thm:lastTheorem}
Let $\varphi:\mathbb{R}^+\to\mathbb{R}^+\cup\{0\}$ be bounded and non increasing with $\int_{\mathbb{R}^n}\abs{x}^n\varphi(\abs{x}) dx<\infty$. Let $d\Pi_j=\gamma_j ds dy$ with $\supp\gamma_j\subseteq [0,r_j]\times \overline{B}(0,r_j)$ and $\norm{\gamma_j}_\infty\leq\frac{A}{r_j^{n+1}}$.
Then $\mathscr{M}(f)(x)$ is dominated by the Hardy-Littlewood maximal function $f^*(x)$.
\end{theorem}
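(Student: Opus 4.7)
The plan is to reduce the estimate to Corollary~\ref{coro:ofTheoremBoundednessgammaImpliesPointwiseSupremum} via a layer--cake decomposition of the non-increasing profile $\varphi$. Since $\varphi$ is bounded and monotone, and the moment hypothesis forces $\varphi(\rho)\to 0$ as $\rho\to\infty$, for each $\lambda\in(0,\|\varphi\|_\infty)$ the level set $\{t>0:\varphi(t)>\lambda\}$ is, up to a set of measure zero, an interval $(0,\beta(\lambda))$ with $\beta(\lambda)=\sup\{t>0:\varphi(t)>\lambda\}$ finite. The standard layer--cake identity then gives
\begin{equation*}
\varphi(t)=\int_0^{\|\varphi\|_\infty}\mathcal{X}_{(0,\beta(\lambda))}(t)\,d\lambda.
\end{equation*}

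Inserting this decomposition into the formula \textit{(\ref{propo:propertieskerneloperators}.a)} for $K_j$ and applying Fubini yields
\begin{equation*}
K_j(x)=\int_0^{\|\varphi\|_\infty}K_j^{1,\beta(\lambda)}(x)\,d\lambda,
\end{equation*}
where $K_j^{1,\beta(\lambda)}$ denotes the kernel associated with the elementary profile $\mathcal{X}_{(0,\beta(\lambda))}$ and with the same joint distribution $\Pi_j$. Convolving against $|f|$, pulling the supremum in $j$ inside the $d\lambda$--integral (each integrand is non-negative and bounded above by a quantity free of $j$), and invoking Corollary~\ref{coro:ofTheoremBoundednessgammaImpliesPointwiseSupremum} pointwise in $\lambda$ produce
\begin{equation*}
\mathscr{M}f(x)\leq A\,V_n^2\,f^*(x)\int_0^{\|\varphi\|_\infty}\beta(\lambda)^n\bigl(\beta(\lambda)+1\bigr)^n\,d\lambda.
\end{equation*}

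The remaining task, which I expect to be the main (though not deep) obstacle, is to verify that this last integral is finite. Expanding via the binomial theorem and applying Fubini through the identity $\beta(\lambda)^m=m\int_0^{\beta(\lambda)}\rho^{m-1}d\rho$ converts the problem into showing that the moments $\int_0^\infty\rho^{m-1}\varphi(\rho)d\rho$ are finite for every integer $m$ with $n\leq m\leq 2n$. For each such $m$, boundedness of $\varphi$ handles the integral on $(0,1)$, while on $(1,\infty)$ one dominates $\rho^{m-1}\leq\rho^{2n-1}$ and invokes $\omega_n\int_0^\infty\rho^{2n-1}\varphi(\rho)d\rho=\int_{\mathbb{R}^n}|x|^n\varphi(|x|)dx<\infty$. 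Combining the pieces yields a finite constant $C$ with $\mathscr{M}f(x)\leq C f^*(x)$ for every $x\in\mathbb{R}^n$, which is the desired conclusion.
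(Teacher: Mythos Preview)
Your argument is correct. It is a continuous analogue of the paper's proof, which proceeds by a dyadic decomposition of the domain of $\varphi$ rather than a layer--cake decomposition of its range. In the paper one writes $K_j=K_j^0+\sum_{l\geq 1}K_j^l$ according to whether $\tfrac{|x-y|}{s}\in(0,1)$ or $\tfrac{|x-y|}{s}\in(2^{l-1},2^l)$, bounds $\varphi$ on the $l$--th shell by $\varphi(2^{l-1})$ via monotonicity, applies Corollary~\ref{coro:ofTheoremBoundednessgammaImpliesPointwiseSupremum} with $\alpha=\varphi(2^{l-1})$, $\beta=2^l$, and then shows $\sum_{l\geq 1}\varphi(2^{l-1})2^{nl}(2^l+1)^n<\infty$ by comparison with the moment integral. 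Your version slices $\varphi$ in the $\lambda$--variable instead, applies the same corollary with $\alpha=1$, $\beta=\beta(\lambda)$, and reduces the tail condition to finiteness of $\int_0^{\|\varphi\|_\infty}\beta(\lambda)^n(\beta(\lambda)+1)^n\,d\lambda$, which by the layer--cake identity is a linear combination of the moments $\int_0^\infty\rho^{m-1}\varphi(\rho)\,d\rho$ for $n\leq m\leq 2n$. Both routes exploit the monotonicity of $\varphi$ in exactly the same way and land on the same corollary; your continuous version is a bit sleeker in that it avoids the dyadic bookkeeping, while the paper's discrete sum is more explicit about the constants.
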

\begin{proof}
From the integrability and monotonicity properties of $\varphi$ we have
\begin{align*}
\int_{\mathbb{R}^n}\abs{x}^n\varphi(\abs{x}) dx &\geq \omega_n\int_1^\infty\rho^{2n-1}\varphi(\rho) d\rho\\&= \omega_n\sum_{l\geq 0}\int_{2^l}^{2^{l+1}} \rho^{2n}\varphi(\rho) \frac{d\rho}{\rho}\\
&\geq \omega_n\sum_{l\geq 0}\varphi(2^{l+1})2^{2ln}\int_{2^l}^{2^{l+1}} \frac{d\rho}{\rho}\\
&= \omega_n\log 2\sum_{l\geq 0}\varphi(2^{l+1})2^{2ln}.
\end{align*}
%
Hence 
Now $(K_j * \abs{f})(x) = (K_j^0 *\abs{f})(x) + \sum_{l\geq 1}(K_j^l *\abs{f})(x)$ with
\begin{equation*}
K_j^0(x) = \int_{\mathbb{R}^+}\int_{\mathbb{R}^n}\frac{1}{s^n}\varphi\left(\frac{\abs{x-y}}{s}\right) \mathcal{X}_{(0,1)}\left(\frac{\abs{x-y}}{s}\right)d\Pi_j(s,y)
\end{equation*}
and
\begin{equation*}
K_j^l(x) = \int_{\mathbb{R}^+}\int_{\mathbb{R}^n}\frac{1}{s^n}\varphi\left(\frac{\abs{x-y}}{s}\right) \mathcal{X}_{(2^{l-1},2^l)}\left(\frac{\abs{x-y}}{s}\right)d\Pi_j(s,y).
\end{equation*}
Since $\varphi$ is bounded $K_j^0 * f$ is uniformly dominated by $f^*$ because of Theorem~\ref{thm:BoundednessgammaImpliesPointwiseSupremum}. On the other hand, since for $l\geq 1$ we have
\begin{equation*}
K_j^l(x)\leq \varphi(2^{l-1})\int_{\mathbb{R}^+}\int_{\mathbb{R}^n}\frac{1}{s^n} \mathcal{X}_{(0,2^l)}\left(\frac{\abs{x-y}}{s}\right)d\Pi_j(s,y).
\end{equation*}
Then, from Corollary~\ref{coro:ofTheoremBoundednessgammaImpliesPointwiseSupremum}, with $\alpha=\varphi(2^{l-1})$ and $\beta=2^l$,
\begin{equation*}
\sup_{j\geq 1} (K_j^l * \abs{f})(x)\leq \varphi(2^{l-1})2^{nl}(2^l+1)^n V_n^2 A f^*(x)
\end{equation*}
and we are done since
\begin{align*}
\sum_{l\geq 1}\varphi(2^{l-1})2^{nl}(2^l+1)^n&\leq \varphi(1)2^n 6^n + \sum_{l\geq 0} \varphi(2^{l+1}) 2^{2ln}\\
&\leq \varphi(1)2^n 6^n +\frac{1}{\omega_n \log 2}\int_{\mathbb{R}^n}\abs{x}^n\varphi(\abs{x}) dx\\
&<\infty.
\end{align*}
\end{proof}

Notice that the conditions on $\varphi$ in Theorem~\ref{thm:nujNoSelfSimilarity} allow heavy tails for $\varphi$. And also singularity of $\varphi$ at the origin. Instead in Theorem~\ref{thm:lastTheorem} the profile $\varphi$ is bounded and heavy tails like Poisson type kernels are not covered.


\providecommand{\bysame}{\leavevmode\hbox to3em{\hrulefill}\thinspace}
\providecommand{\MR}{\relax\ifhmode\unskip\space\fi MR }
\providecommand{\MRhref}[2]{%
	\href{http://www.ams.org/mathscinet-getitem?mr=#1}{#2}
}
\providecommand{\href}[2]{#2}

\section*{Acknowledgements} 
This work was supported by the Ministerio de Ciencia, Tecnolog\'ia e Innovaci\'on-MINCYT in Argentina: Consejo Nacional de Investigaciones Cient\'ificas y T\'ecnicas-CONICET and Agencia Nacional de Promoci\'on Cient\'ifica y T\'ecnica-ANPCyT, (Grant PICT 2015-3631).


\bigskip

%

\noindent{\footnotesize
\noindent\textit{Affiliations:\,}
\textsc{Instituto de Matem\'{a}tica Aplicada del Litoral, UNL, CONICET.}

\smallskip
\noindent\textit{Address:\,}\textmd{CCT CONICET Santa Fe, Predio ``Alberto Cassano'', Colectora Ruta Nac.~168 km 0, Paraje El Pozo, S3007ABA Santa Fe, Argentina.}

\smallskip
\noindent \textit{E-mail address:\, }\verb|haimar@santafe-conicet.gov.ar|; \verb|ivanagomez@santafe-conicet.gov.ar| 
}
%

\end{document}